\numberwithin{equation}{section}
\numberwithin{figure}{section}
\theoremstyle{plain}
\newtheorem{thm}{\protect\theoremname}[section]
\theoremstyle{plain}
\newtheorem{conjecture}[thm]{\protect\conjecturename}
\theoremstyle{plain}
\newtheorem{lem}[thm]{\protect\lemmaname}
\theoremstyle{plain}
\newtheorem{prop}[thm]{\protect\propositionname}
\theoremstyle{definition}
\newtheorem{defn}[thm]{\protect\definitionname}
\theoremstyle{remark}
\newtheorem{rem}[thm]{\protect\remarkname}
\theoremstyle{remark}
\newtheorem*{claim*}{\protect\claimname}
\newtheorem*{Notation*}{Notation}\subjclass[2010]{Primary 14E05, 53D05; Secondary 32J27}
\newenvironment{myitemize}{\begin{itemize}[leftmargin=*]
\addtolength{\leftmargin}{0in}
}{\end{itemize}}
\providecommand{\claimname}{Claim}
  \providecommand{\conjecturename}{Conjecture}
  \providecommand{\definitionname}{Definition}
  \providecommand{\lemmaname}{Lemma}
  \providecommand{\propositionname}{Proposition}
  \providecommand{\remarkname}{Remark}
\providecommand{\theoremname}{Theorem}
\providecommand{\claimname}{Claim}
\providecommand{\conjecturename}{Conjecture}
\providecommand{\definitionname}{Definition}
\providecommand{\lemmaname}{Lemma}
\providecommand{\propositionname}{Proposition}
\providecommand{\remarkname}{Remark}
\providecommand{\theoremname}{Theorem}
\begin{document}
\global\long\def\C{\mathbb{\mathbb{C}}}%
 
\global\long\def\R{\mathbb{\mathbb{R}}}%
 
\global\long\def\Q{\mathbb{\mathbb{Q}}}%
 
\global\long\def\Z{\mathbb{\mathbb{Z}}}%
 
\global\long\def\N{\mathbb{\mathbb{N}}}%
 
\global\long\def\k{\Bbbk}%
 
\global\long\def\bP{\mathbb{P}}%

\global\long\def\e{\epsilon}%
 
\global\long\def\O{\mathcal{O}}%
 
\global\long\def\X{\mathcal{X}}%
 
\global\long\def\dual{\!\vee}%
 
\global\long\def\I{\mathcal{I}}%
 
\global\long\def\U{\mathcal{U}}%

\global\long\def\Ind#1{\mathrm{Ind}(#1)}%

\global\long\def\Effc#1#2{\overline{\mathrm{Eff}}_{#1}(#2)}%
 
\global\long\def\EffcX#1{\overline{\mathrm{Eff}}(#1)}%
 
\global\long\def\Eff#1#2{\mathrm{Eff}_{#1}(#2)}%
 
\global\long\def\Big#1#2{\mathrm{Big}_{#1}(#2)}%
 
\global\long\def\Neff#1#2{\mathrm{Nef}_{#1}(#2)}%
 
\global\long\def\Nef#1{\mathrm{Nef}(#1)}%

\global\long\def\Hom{\mathit{\mathrm{Hom}}}%
 
\global\long\def\Ker{\mathrm{Ker}}%
 
\global\long\def\rk{\mathrm{rk}}%
 
\global\long\def\Spec#1{\mathrm{Spec}(#1)}%
 
\global\long\def\Hilb#1{\mathrm{Hilb}^{#1}}%

\global\long\def\Num{\mathrm{Num}}%
 
\global\long\def\Rat{\mathrm{Rat}}%
 
\global\long\def\Alg{\mathrm{Alg}}%
 
\global\long\def\P#1{\mathbb{\mathbb{P}}^{#1}}%
 
\global\long\def\Pn{\mathbb{\mathbb{P}}^{n}}%
 
\global\long\def\Pw{\mathbb{\mathbb{P}}(\mathbf{w})}%
 
\global\long\def\G{\mathbb{\mathbb{G}}}%

\global\long\def\g{\mathfrak{g}}%
 
\global\long\def\h{\mathfrak{h}}%
 
\global\long\def\b{\mathfrak{b}}%
 
\global\long\def\p{\mathfrak{p}}%

\global\long\def\Yb{Y_{b}}%
 
\global\long\def\Xb{X_{b}}%
 
\global\long\def\ra{\rightarrow}%
 
\global\long\def\rra{\dashrightarrow}%
 
\global\long\def\less{\backslash}%
 
\global\long\def\FF{F\times F}%
 
\global\long\def\dom#1{\mathrm{dom}(#1)}%
 
\global\long\def\sd{|}%
\global\long\def\wt#1{\widetilde{#1}}%
 
\global\long\def\td#1{\tilde{#1}}%
 
\global\long\def\Io{I^{\circ}}%

\title{The indeterminacy locus of the Voisin map}
\author{Giosuè Emanuele Muratore}
\date{\today}
\begin{abstract}
Beauville and Donagi proved that the variety of lines $F(Y)$ of a
smooth cubic fourfold $Y$ is a hyperkähler variety. Recently, C.
Lehn, M. Lehn, Sorger and van Straten proved that one can naturally
associate a hyperkähler variety $Z(Y)$ to the variety of twisted
cubics on $Y$. Then, Voisin defined a degree 6 rational map $\psi:F(Y)\times F(Y)\rra Z(Y)$.
We will show that the indeterminacy locus of $\psi$ is the locus
of intersecting lines.
\end{abstract}

\address{Università Degli Studi Roma Tre\\
Dipartimento di Matematica e Fisica\\
Largo San Murialdo 1, 00146 Roma Italy.}
\address{Université de Strasbourg\\
IRMA\\
7 Rue René Descartes, 67000 Strasbourg France.}
\email{gmuratore@mat.uniroma3.it}
\keywords{hyperkähler, irreducible symplectic variety, rational map.}
\maketitle

\section{Introduction}

It is a classical result that a manifold with a Ricci flat metric
has trivial first Chern class, and by Bogomolov's decomposition \cite{MR0345969,MR730926}
such manifolds have a finite étale cover given by the product of a
Torus, Calabi-Yau varieties and hyperkähler varieties. Hyperkähler
manifolds are interesting in their own and they are the subject of
an intensive research. The first examples are $K3$ surfaces, and
Beauville proved in \cite[Th\' eor\` emes 3 and 4]{MR730926} that
for any $n\ge0$ the Hilbert schemes of points $X^{[n]}$ where $X$
is a $K3$ surface or the generalized Kummer varieties $K^{n}A$ associated
to an Abelian surface $A$, are hyperkähler varieties. Any hyperkähler
variety that is a deformation of $X^{[n]}$ where $X$ is a $K3$
surface (respectively, a generalized Kummer variety) is called $K3^{[n]}$-type
(respectively, $K^{n}A$-type). Those examples are particularly interesting
because they permit to construct hyperkähler varieties of any even
complex dimension. Later O'Grady in \cite{MR1966024,MR1703077} constructed
two new examples in dimension 6 and 10 of hyperkähler varieties that
are not deformation of known types. There are few explicit complete
families of hyperkähler manifolds of $K3^{[n]}$-type. Beauville and
Donagi proved in \cite[Proposition 1]{MR818549} that the variety
of lines $F(Y)$ of a smooth cubic fourfold $Y\subseteq\P 5$ is an
hyperkähler variety of $K3^{[2]}$-type. Another example was given
much more recently by C. Lehn, M. Lehn, Sorger, and van Straten in
\cite{1305.0178}. They observed that if $F_{3}(Y)$ is the moduli
space of generalized twisted cubic curves on $Y$, then $F_{3}(Y)$
is a $\P 2$-fibration over a smooth variety $Z'(Y)$. Moreover, there
is a divisor in $Z'(Y)$ that can be contracted and this contraction
produces a hyperkähler variety $Z(Y)$. This variety is of $K3^{[4]}$-type
by \cite[Corollary]{MR3709062} or \cite[Corollary 6.2]{1504.06406}.

On the other hand the study of $k$-cycles on smooth complex projective
varieties is a classical subject and it is very interesting on hyperkähler
manifolds with respect to several regards. For example, while it is
a classical result that the cone of nef divisors is contained in the
cone of pseudoeffective divisors, in general $\Neff kX\nsubseteq\Effc kX$
for $2\le k\le\dim X-2$. The first example of such phenomenon was
given in \cite{MR2862063}. Later, Ottem proved that if the cubic
$Y$ is very general, then the second Chern class $c_{2}(F(Y))$ of
the Fano variety of lines in $Y$ is nef but it is not effective \cite[Theorem 1]{1505.01477}.

It is known, due to Mumford's Theorem \cite[Theorem]{MR0249428},
that for a projective hyperkähler variety $X$ of dimension $2n$
the kernel of the cycle map $cl:A^{2n}(X)\ra H^{4n}(X)$ is infinite-dimensional
(see \cite[III.10]{MR1997577} for more details). Nevertheless Beauville
made the following
\begin{conjecture}[\cite{MR2187148}]
Let $X$ be a projective hyperkähler manifold. Then the cycle class
map is injective on the subalgebra of $A^{*}(X)$ generated by divisors. 
\end{conjecture}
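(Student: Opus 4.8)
The plan is to reduce the conjecture to lifting, from cohomology to the Chow ring, the structure theorem of Verbitsky and Bogomolov for the subalgebra generated by divisors. For a hyperk\"ahler manifold $X$ of dimension $2n$ the subalgebra of $H^*(X,\Q)$ generated by $H^2(X,\Q)$ is the quotient of $\mathrm{Sym}^\bullet H^2(X,\Q)$ by the ideal generated by the classes $\alpha^{n+1}$ with $q(\alpha)=0$, where $q$ is the Beauville--Bogomolov--Fujiki form; equivalently, the intersection numbers of products of divisors are dictated by the Fujiki relation $\int_X\alpha^{2n}=c_X\,q(\alpha)^n$. Because $H^1(X,\O_X)=0$, one has $\mathrm{Pic}(X)\otimes\Q\hookrightarrow H^2(X,\Q)$, so the cycle class map is automatically injective in degree one. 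The task is therefore to show that every cohomological relation $P(D_1,\dots,D_r)=0$ among algebraic classes $D_i\in\mathrm{NS}(X)_\Q$ already holds in $A^*(X)_\Q$; by the Verbitsky--Bogomolov presentation the building blocks of such relations are the vanishing of $(n+1)$-st powers of isotropic classes, so the prototype to lift is $\alpha^{n+1}=0$ for isotropic $\alpha$.

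First I would dispose of the classes realized geometrically. If $f\colon X\ra\bP^n$ is a Lagrangian fibration and $H$ is the hyperplane class, then $\alpha=f^*H$ is nef and satisfies $q(\alpha)=0$ by Matsushita's theorem, and the relation is immediate in the Chow ring: $\alpha^{n+1}=f^*(H^{n+1})=0$ since $H^{n+1}=0$ already on $\bP^n$. More generally, any divisor class pulled back under a morphism to a variety of dimension $\le n$ yields its defining relation for free in $A^*(X)_\Q$. This is exactly the mechanism behind the known cases (Beauville--Voisin for $K3$ surfaces and its extensions), and it handles every isotropic algebraic class that is induced by a fibration or by a map to projective space. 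The second step is to propagate these rational equivalences from the nef, fibration-type isotropic classes to all isotropic algebraic classes, combining the Torelli theorem with a spreading argument over the locus of deformations that carry a Lagrangian fibration.

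The main obstacle --- and the reason the statement is still a conjecture in full generality --- is precisely this propagation. On one side, the hyperk\"ahler SYZ conjecture, which would realize every nef isotropic class by a Lagrangian fibration, is itself open, so one cannot assume all the needed classes are geometric; on the other side, Chow groups are not deformation invariant, so a rational equivalence $\alpha^{n+1}=0$ valid on a special member of a family cannot be naively transported to the general one. Bridging this gap seems to require a motivic refinement: a multiplicative Chow--K\"unneth (Beauville ``splitting'') decomposition of $X$ together with a Chow-level lift of the Looijenga--Lunts--Verbitsky $\mathfrak{so}$-action, arranged so that all divisor classes lie in the degree-$0$ summand $A^*(X)_{(0)}$, on which the cycle class map is injective. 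I would therefore concentrate the effort on constructing such a self-dual, multiplicative decomposition and proving that divisors have weight zero; the vanishing of the powers of isotropic classes inside $A^*(X)_{(0)}$ would then reduce to the fibration computation above, spread over the appropriate deformation family.
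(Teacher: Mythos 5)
There is nothing for your argument to be measured against: the statement you were given is Beauville's \emph{weak splitting conjecture}, which the paper quotes (with citation) purely as motivation and does not prove --- it is an open problem, and the paper's actual content (the indeterminacy locus of the Voisin map) is unrelated to establishing it. So the question is only whether your text constitutes a proof, and it does not; you say so yourself. What you have written is a research program, and its load-bearing steps are precisely the open problems that make this a conjecture. Concretely: (i) your first step handles only isotropic classes realized by Lagrangian fibrations, and the reduction of the general case to that one requires the hyperk\"ahler SYZ conjecture, which is open; (ii) your ``propagation'' step is not an argument --- rational equivalence is not deformation invariant, so a relation $\alpha^{n+1}=0$ in $A^{*}(X_{0})$ on a special fibre carrying a fibration cannot be transported to nearby fibres, and no Torelli-plus-spreading mechanism is known to do this (this failure is exactly why Beauville--Voisin type results are proved variety-by-variety); (iii) your fallback, a multiplicative Chow--K\"unneth decomposition with all divisors in $A^{*}(X)_{(0)}$ and cycle class map injective on that summand, is itself a strengthening of the conjecture you are trying to prove, not a tool available for proving it.

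Two smaller points. The Verbitsky--Bogomolov presentation of the subalgebra of $H^{*}(X,\Q)$ generated by $H^{2}$, and the computation $(f^{*}H)^{n+1}=f^{*}(H^{n+1})=0$ in the Chow ring for a Lagrangian fibration $f:X\ra\bP^{n}$, are both correct and are indeed the standard starting points in the literature (Voisin, Fu, Riess). But reducing an open conjecture to two other open conjectures is a gap in the strongest sense: no step of your outline closes it. If your goal is to engage with this circle of ideas in the context of this paper, the productive direction is the one the author points to --- using the explicit description $\Ind{\psi}=I$ to study algebraic cycles on $Z(Y)$, where unconditional statements can actually be proved.
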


See \cite{MR3524175} for an introduction to these topics. On the
other hand Shen and Vial in \cite{MR3460114} used a codimension 2
algebraic cycle to give evidence for the existence of a certain decomposition
for the Chow ring of $F(Y)$ for a very general cubic fourfold $Y$.

Voisin constructed in \cite[Proposition 4.8]{MR3524175} a degree
6 rational map 
\[
\psi:F(Y)\times F(Y)\rra Z(Y).
\]
Roughly speaking, the map $\psi$ sends pairs of non-incident lines
$(l,l')\in F(Y)\times F(Y)$ to the (class of the) degree 3 rational
normal curve in the linear system $|L-L'-K_{S_{l,l'}}|$ of the cubic
surface $S_{l,l'}:=\left\langle L,L'\right\rangle \cap Y$. This article
is devoted to the study of the indeterminacy locus of this map. In
particular, we prove the following. 
\begin{thm}
\label{thm:Main}The indeterminacy locus of the Voisin map $\psi:F(Y)\times F(Y)\rra Z(Y)$
is the variety $I$ of intersecting lines in $Y$. 
\end{thm}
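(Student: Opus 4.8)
The plan is to establish the two inclusions $\Ind\psi\subseteq I$ and $I\subseteq\Ind\psi$ separately, using throughout that $\FF$ is smooth, since $F=F(Y)$ is a smooth hyperk\"ahler variety, and that $Z(Y)$ is projective, so that the indeterminacy locus of $\psi$ has codimension at least two. The organizing observation is that the construction of $\psi$ begins by forming the linear span $\langle L,L'\rangle$ and cutting out $S_{l,l'}=\langle L,L'\rangle\cap Y$: for a disjoint pair the span is a $\P 3$ and $S_{l,l'}$ a genuine cubic surface, whereas on $I$ the span collapses to the plane $\Pi=\langle L,L'\rangle$ (or to $L$ itself on the diagonal, which lies in $I$) and $S_{l,l'}$ degenerates to the plane cubic $\Pi\cap Y=L\cup L'\cup L''$. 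Thus the rule $(l,l')\mapsto\langle L,L'\rangle$ is a morphism $\sigma\colon U\ra\G$ from $U:=\FF\less I$ to the Grassmannian $\G$ of $3$-planes in $\P 5$, and it is this map, together with its failure to extend across $I$, that I expect to govern both inclusions.

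For the inclusion $\Ind\psi\subseteq I$ I would realize $\psi|_{U}$ as a morphism by a relative construction. Pulling back the universal $3$-plane along $\sigma$ and intersecting with $Y$ yields a flat family $\pi\colon\mathcal S\ra U$ of cubic surfaces carrying the two universal line-divisors $\mathcal L,\mathcal L'$; flatness is exactly where one uses that the span does not drop dimension over $U$. On each fibre the class $\mathcal L-\mathcal L'-K_{\mathcal S/U}$ restricts to a class $C$ with $C\cdot(-K_{S})=3$ and $C^{2}=1$, the numerical type of a twisted cubic, and $\dim|C|=2$ by Riemann--Roch. I would then show that $\pi_{*}\O_{\mathcal S}(\mathcal L-\mathcal L'-K_{\mathcal S/U})$ is locally free of rank $3$, so that the relative linear system is a $\P 2$-bundle $P\ra U$ equipped with a universal family of twisted cubics; this family classifies a morphism carrying each fibre $\P 2$ of $P\ra U$ onto a fibre of the Lehn--Lehn--Sorger--van Straten $\P 2$-fibration $F_{3}(Y)\ra Z'(Y)$, hence induces a morphism $U\ra Z'(Y)$, and composing with the contraction $Z'(Y)\ra Z(Y)$ produces a morphism $U\ra Z(Y)$ agreeing with $\psi$. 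The delicate point is the behaviour over disjoint pairs whose cubic surface is singular, reducible or non-reduced: there the linear system must be replaced by its generalized twisted cubic incarnation of \cite{1305.0178}, and one must check that $h^{0}$ and $h^{1}$ remain constant, equivalently that the limit is still a single point of $Z(Y)$. I expect this to follow from the properness of $F_{3}(Y)$ and the normality of $U$ by the valuative criterion, with uniqueness of the limit guaranteed precisely because $\sigma$ remains a morphism over $U$.

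The reverse inclusion $I\subseteq\Ind\psi$ is the main obstacle, and I would prove it by exhibiting at least two distinct limits of $\psi$ along curves in $U$ approaching a fixed $(l,l')\in I$. The $3$-planes containing $\Pi$ form a $\P 2$ inside $\G$, and a positive-dimensional family of them arises as limits $\lim_{t\to0}\langle L_{t},L'_{t}\rangle$ along families of disjoint pairs $(l_{t},l'_{t})\ra(l,l')$ inside $F(Y)$. For each such limiting plane the cubic surface degenerates to $S=\P 3\cap Y\supseteq\Pi\cap Y$, and $\psi(l_{t},l'_{t})$ degenerates to a generalized twisted cubic supported on this $S$. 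Because distinct $3$-planes through $\Pi$ cut out genuinely distinct cubic surfaces, the resulting limit curves lie on different surfaces in $\P 5$, and I would argue that they give distinct points of $Z(Y)$; this is exactly what obstructs the extension of $\psi$ over $(l,l')$. The hard core of the theorem is to make this precise: to identify the limiting generalized twisted cubic attached to each direction of approach, and to prove that two transverse families produce generalized twisted cubics that are not identified by the contraction $Z'(Y)\ra Z(Y)$, so that their images in $Z(Y)$ are genuinely different.
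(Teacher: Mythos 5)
Your organizing idea --- that the indeterminacy of $\psi$ is governed by the span map $(l,l')\mapsto\langle L,L'\rangle$ into $G(4,6)$ --- is exactly the paper's, and the inclusion $\Ind{\psi}\subseteq I$ can be made to work (though the paper's route is cleaner than yours: instead of proving local freeness of $\pi_{*}\O_{\mathcal S}(\mathcal L-\mathcal L'-K_{\mathcal S/U})$ over pairs with arbitrarily bad surfaces, it uses the everywhere-defined family $x\mapsto[L\cup C'_{x}]$ on the $\P 1$-bundle $P\times F\ra\FF$ together with the Rigidity Lemma to descend to a morphism on $\FF\less I$; your valuative-criterion fallback does not by itself give a well-defined extension, since properness of the target only controls limits along arcs, not independence of the arc). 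The genuine gap is in the reverse inclusion, at the two places you yourself flag but do not resolve. First, you assert that the limits $\lim_{t\to 0}\langle L_{t},L'_{t}\rangle$ sweep out a positive-dimensional family of $3$-planes through $\Pi$; this is not automatic and is the content of the paper's Proposition on $\rho_{|\FF}$: one must take $(l,l')$ \emph{general} in $I$ (using irreducibility of $I$ and surjectivity of $I\ra F$, so that $L$ is of first type with $N_{L/Y}=\O_{\P 1}\oplus\O_{\P 1}\oplus\O_{\P 1}(1)$ globally generated), identify $T(F)_{l}\cong H^{0}(L,N_{L/Y})$, and check that the limiting $3$-plane along a curve with tangent direction $s_{a,b}$ is $\langle v_{l},v_{l'},s_{a,b}(x)\rangle$, which genuinely varies with $[a:b]\in\P 1$. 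Without this computation you cannot exclude that all directions of approach yield the same limiting $3$-plane.

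Second, and more seriously, your step ``distinct limiting cubic surfaces give distinct points of $Z(Y)$'' is precisely what fails to be obvious, because $\sigma:Z'\ra Z$ contracts the divisor $\bP(T_{Y})$ onto $Y\subseteq Z$ and could a priori identify the distinct limits you produce in $Z'$. You name this as ``the hard core'' but supply no mechanism for it. The paper's missing ingredient is a dimension count on a resolution $\wt{\psi}:\wt{\FF}\ra Z$: off the exceptional divisor $\wt{\psi}$ is quasi-finite (this uses triviality of the canonical classes, via $E'=R_{\wt{\psi}}$), so the locus $W=\pi(\wt{\psi}^{-1}(Y))$ cannot contain a dense subset of the $6$-dimensional $I$, since $\dim Y=4$. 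Hence on a dense subset of $I$ any putative value of $\psi$ lands in $Z\less Y$, where $\sigma^{-1}$ and then $g:Z'\ra G(4,6)$ are morphisms; composing would extend the span map over a dense subset of $I$, contradicting the first point. Some argument of this kind --- showing that the contracted locus $Y\subseteq Z$ cannot absorb the limits over a dense subset of $I$ --- is indispensable, and its absence leaves your proof of $I\subseteq\Ind{\psi}$ incomplete.
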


We hope that the explicit description of $\Ind{\psi}$ will contribute
to the study of $c_{2}(Z(Y))$, the study of algebraic cycles on $Z(Y)$
and to other aspects of the geometry of $Z(Y)$. We hope to return
to these topics in a future work.

I thank my advisor Gianluca Pacienza for suggesting me this problem
and for introducing me to the study of hyperkähler varieties. Also,
I thank Angelo Lopez for his constant help and for the time he spent
to me during these three years as Ph.D. student. Thanks also to Carlo
Gasbarri for all the suggestions and helpful advises given during
the year at the University of Strasbourg. Furthermore, I appreciate
the help of Charles Vial and Mingmin Shen for explaining to me some
parts of their paper \cite{MR3460114}. I would like to thank also
Edoardo Sernesi and Robert Laterveer for their mathematical advice.
Finally, I thank Claire Voisin for her suggestions, and Christian
Lehn for his hospitality at TU in Chemnitz and for answering all my
questions.

\section{General facts}

A variety $X$ is an integral and separated algebraic scheme over
$\C$. A compact Kähler manifold is hyperkähler if it is simply connected
and the space of its global holomorphic two-forms is spanned by a
symplectic form.

Throughout this paper we will use the following.

\begin{Notation*}$\hspace{1em}$

\begin{myitemize}

\item Given a rational map of varieties $f:A\rra B$, we denote by
$\Ind f$ the complement of the largest open subset of $A$ on which
$f$ is represented by a regular function.

\item $G(k,n)$ is the Grassmannian variety parametrizing $k$-dimensional
linear subspaces of an $n$-dimensional vector space.

\item $T(X)_{x}$ is the tangent space of a smooth variety $X$ at
the point $x$.

\item $Y$ is a smooth cubic fourfold that does not contain a plane.

\item $F$ is the variety of lines on $Y$. By \cite[Proposition 1]{MR818549}
$F$ is a 4-dimensional hyperkähler subvariety of $G(2,6)$.

\item Given a point $l\in F$, the line in $Y$ that it represents
will be indicated with the same letter in uppercase, i.e. $l=[L]$.

\item We denote by 
\begin{equation}
\xymatrix{P:=\left\{ (l,x)\in F\times Y|x\in L\right\} \ar[r]^{\,\,\qquad\qquad\,\,\,\,q}\ar[d]_{p} & Y\\
F
}
\label{eq:F_P_Y}
\end{equation}
the universal family of lines in $Y$.

\item $I$ is the closed subscheme, with reduced structure, of intersecting
lines, i.e. 
\[
I:=\left\{ (l,l')\in\FF\sd L\cap L'\neq\emptyset\right\} .
\]

\item If $X_{1},X_{2}$ are subvarieties in the same projective space,
then $\left\langle X_{1},X_{2}\right\rangle $ is their linear span.

\item If $(l,l')\notin I$, the cubic surface in $Y$ defined by
$L$ and $L'$ is $S_{l,{l'}}:=\left\langle L,L'\right\rangle \cap Y$.

\end{myitemize}\end{Notation*} 
\begin{lem}
\label{lem:CommDiagRatMaps}Let $A,B$ and $C$ be varieties sitting
inside the following commutative diagram 
\[
\xymatrix{A\ar@{-->}[d]_{h}\ar@{-->}[r]^{f} & B\\
C\ar[ru]_{g}
}
\]
where $f$ and $h$ are rational maps and $g$ is a morphism. Then
$\Ind f\subseteq\Ind h$.
\end{lem}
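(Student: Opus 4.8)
The plan is to recast the inclusion of indeterminacy loci as an inclusion of domains of definition, and then to use that $g$, being a morphism, is defined everywhere. Write $\dom f:=A\less\Ind f$ for the largest open subset of $A$ on which $f$ is represented by a morphism, and likewise $\dom h:=A\less\Ind h$. The desired inclusion $\Ind f\subseteq\Ind h$ is equivalent to $\dom h\subseteq\dom f$, so it suffices to show that $f$ extends to a morphism on the whole of $\dom h$.

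To produce such an extension I would restrict $h$ to its domain, obtaining an honest morphism $h|_{\dom h}:\dom h\ra C$, and then compose with $g$. Since $g:C\ra B$ is a morphism defined on all of $C$, the composite $g\circ h|_{\dom h}:\dom h\ra B$ is again a morphism, and it represents the rational map $g\circ h$ on $\dom h$; hence $\dom h\subseteq\dom{g\circ h}$. By the commutativity of the diagram, $f$ and $g\circ h$ coincide as rational maps $A\rra B$. Because $B$ is separated, two rational maps from the variety $A$ to $B$ that agree on a dense open subset are equal, and in particular have the same domain of definition, so $\dom{g\circ h}=\dom f$. Combining these gives $\dom h\subseteq\dom f$, that is, $\Ind f\subseteq\Ind h$.

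I do not expect any genuine difficulty here: the entire content is that precomposing the everywhere-defined morphism $g$ with $h$ creates no indeterminacy beyond that of $h$. The only point demanding care is the bookkeeping around the definition of the domain $\dom f$ — one must confirm that $g\circ h|_{\dom h}$ is genuinely a representative of $f$, rather than of a map merely agreeing with $f$ on a smaller open set, and it is precisely the separatedness of $B$ that makes the local representatives of $f$ glue into a single morphism on the union of their domains, so that $\dom f$ is well defined and the comparison above is legitimate.
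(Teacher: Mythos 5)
Your argument is correct and is essentially the paper's own proof, which likewise observes that $g\circ h$ is a morphism on the domain of $h$ and invokes commutativity to conclude that $f$ is defined there. Your additional remark on separatedness of $B$ only makes explicit the gluing of local representatives that the paper leaves implicit.
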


\begin{proof}
Since the composition $g\circ h$ is defined in the domain of $h$,
then $f$ is defined in the domain of $h$ by commutativity.
\end{proof}
\begin{lem}
\label{rmk:connected} Let $pr_{1}:I\ra F$ be the projection to the
first component. Then each fibre of $pr_{1}$ is a surface. Furthermore,
$pr_{1}^{-1}(l)$ is irreducible for general $l\in F$. 
\end{lem}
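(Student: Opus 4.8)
The plan is to analyse a single fibre $pr_{1}^{-1}(l)=\{l'\in F\mid L\cap L'\ne\emptyset\}$ by pulling it back to the universal family \eqref{eq:F_P_Y}. With $q\colon P\ra Y$ and $p\colon P\ra F$ as there, one has $q^{-1}(L)=\{(l',x)\in P\mid x\in L\cap L'\}$, and the restriction $p|_{q^{-1}(L)}\colon q^{-1}(L)\ra pr_{1}^{-1}(l)$ is surjective; its fibre over a point $l'\ne l$ is the single point $L\cap L'$, while over $l$ it is all of $L$. Hence $p|_{q^{-1}(L)}$ is generically injective, so $pr_{1}^{-1}(l)$ is birational to $q^{-1}(L)$, and I would reduce both assertions to the study of $q^{-1}(L)=\bigcup_{x\in L}F_{x}$, where $F_{x}:=q^{-1}(x)$ is the scheme of lines of $Y$ through $x$. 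Choosing coordinates with $x$ at the vertex, $F_{x}$ is cut out in the $\P 3$ of tangent directions by the conditions $\{Q_{x}=C_{x}=0\}$, where $Q_{x}$ and $C_{x}$ are the quadratic and cubic parts of the equation of $Y$ at $x$ restricted to $T_{x}Y$; for general $x$ this is the genus $4$ complete intersection curve.

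For the dimension statement the lower bound is immediate: a quadric and a cubic in $\P 3$ always meet in dimension $\geq 1$, so every $F_{x}$ is non-empty of dimension $\geq 1$, and fibring over $L\cong\P 1$ gives $\dim q^{-1}(L)\geq 2$. The content is the upper bound, and I expect this to be the main obstacle: I must show that the jump locus $T:=\{x\in Y\mid\dim F_{x}\geq 2\}$ is finite, for then any line $L$ meets $T$ in finitely many points and $\dim q^{-1}(L)=2$. If $\dim F_{x}\geq 2$, the lines through $x$ sweep out a subvariety of dimension $\geq 3$ inside the cubic threefold $Y_{x}:=Y\cap T_{x}Y$. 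Either $Y_{x}$ is reducible or non-reduced, in which case its equation has a linear factor and $Y$ contains a $\P 3$, contradicting the hypothesis that $Y$ contains no plane; or $Y_{x}$ is integral, and then the swept-out subvariety equals $Y_{x}$, so $Y_{x}$ is a cone with vertex $x$. In the latter case $x$ is a triple point of $Y_{x}$, i.e. $Q_{x}$ vanishes, which means the second fundamental form of $Y$ at $x$ is zero, equivalently the Gauss map $\gamma\colon Y\ra(\P 5)^{\vee}$ satisfies $d\gamma_{x}=0$. Since $Y$ is smooth, $\gamma$ is finite and cannot contract a curve; hence $\{x\mid d\gamma_{x}=0\}$ contains no curve, and as $T$ is contained in this locus together with the excluded $\P 3$-case, $T$ is finite. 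This gives $\dim pr_{1}^{-1}(l)=2$ for every $l$.

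For the irreducibility of the general fibre I would take $l$, hence $L$, general, so that $L\cap T=\emptyset$ and every $F_{x}$ with $x\in L$ is a curve; thus $q^{-1}(L)\ra L$ is equidimensional with one-dimensional fibres and $q^{-1}(L)$ is of pure dimension $2$. By generic smoothness in characteristic zero the general fibre $F_{x}$ is smooth, and being a positive-dimensional complete intersection it is connected, hence irreducible; for $L$ general the generic $x\in L$ is of this kind. Equidimensionality forbids components of $q^{-1}(L)$ contracted to points of $L$, so every component dominates $L$; were there two of them the generic fibre would be disconnected, contradicting the irreducibility of $F_{x}$. Therefore $q^{-1}(L)$ is irreducible, and transporting along the birational map $p|_{q^{-1}(L)}$ shows that $pr_{1}^{-1}(l)$ is irreducible for general $l$.
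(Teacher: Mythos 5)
Your reduction to $q^{-1}(L)=\bigcup_{x\in L}F_{x}$ is the same starting point as the paper's (which writes $pr_{1}^{-1}(l)\cong\bigcup_{x\in L}C_{x}$), but from there you diverge in both halves. For the dimension statement the paper simply cites the known fact that $C_{x}$ is a curve except for finitely many $x\in Y$; you reprove it via the dichotomy ``$Y\cap T_{x}Y$ non-integral $\Rightarrow$ a $\P 3$ in $Y$'' versus ``$Y\cap T_{x}Y$ an integral cone $\Rightarrow$ the second fundamental form vanishes at $x$,'' and then kill the latter locus in dimension $\ge1$ by finiteness of the Gauss map ($\gamma^{*}\O(1)=\O_{Y}(2)$ is ample, so no curve is contracted). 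That argument is correct and makes the proof self-contained where the paper relies on a reference. For irreducibility the paper argues quite differently: all the $C_{x}$, $x\in L$, contain the point $l$ itself, so the union is \emph{connected} for every $l$, and then smoothness of $pr_{1}^{-1}(l)$ for general $l$ (cited from Voisin's Torelli paper) gives irreducibility. You instead push irreducibility of the \emph{general} fibre $F_{x}$ (smooth connected $(2,3)$ complete intersection) through the fibration $q^{-1}(L)\ra L$, which requires knowing that no irreducible component of $q^{-1}(L)$ sits over a single point of $L$.

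That last step is where you have a genuine gap. You deduce ``$q^{-1}(L)$ is of pure dimension $2$'' from ``all fibres of $q^{-1}(L)\ra L$ are one-dimensional,'' but equidimensionality of the fibres does not imply purity of the total space: a union of curves $\bigcup_{x}F_{x}$ can perfectly well acquire a one-dimensional irreducible component if some special fibre $F_{x_{0}}$ has a component not lying in the closure of the nearby fibres. This is not a vacuous worry here: the locus of $x\in Y$ where $F_{x}$ is singular or reducible is expected to be a divisor in $Y$, and \emph{every} line meets every effective divisor on $Y$, so a general $L$ does contain such points $x_{0}$; you must rule out that a stray component of $F_{x_{0}}$ becomes a separate component of $q^{-1}(L)$. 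The fix is short but is not the argument you gave: $L$ is a smooth curve in the smooth fourfold $Y$, hence locally cut out by $3$ equations, so $q^{-1}(L)$ is locally cut out by $3$ equations in the smooth fivefold $P$ and by Krull's height theorem every irreducible component of $q^{-1}(L)$ has dimension $\ge2$; combined with your upper bound this gives purity, and a $2$-dimensional component cannot map to a point of $L$ since the fibres are curves. (Note that the weaker bound you would get by cutting $L$ out of $\P 5$ with $4$ equations only gives components of dimension $\ge1$ and does not suffice.) With that repair, and replacing ``the generic fibre would be disconnected'' by ``two dominating components would both be the closure of the irreducible generic fibre, hence equal,'' your proof goes through; the paper's connectedness-plus-smoothness route avoids the purity question altogether at the price of an extra citation.
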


\begin{proof}
Let $x\in Y$ be a point. Let $C_{x}$ be the subvariety of $F$ parametrizing
lines through $x$. Take a system of coordinates of $\P 5$ such that
$x=[0:0:0:0:0:1]$. Then the equation of $Y$ is $x_{5}^{2}q_{1}+x_{5}q_{2}+q_{3}=0$,
where the polynomial $q_{i}\in\C[x_{0},...,x_{4}]$ is homogeneous
of degree $i$. Since $Y$ is smooth, $q_{1}$ is not the zero polynomial.
The variety $C_{x}$ can now be seen in $\P 4\cong\{x_{5}=0\}$ as
given by $q_{1}=q_{2}=q_{3}=0$. Then $C_{x}$ can be seen as $q_{2}=q_{3}=0$
in $\P 3\cong\{x_{5}=q_{1}=0\}$. Thus, it is connected by \cite[Exercise II.8.4c]{MR0463157}.
Hence, a fibre of $pr_{1}$ is 
\[
pr_{1}^{-1}(l)\cong\bigcup_{x\in L}C_{x}.
\]
It is known that $C_{x}$ is a curve for $x\in Y$, except for finitely
many $x\in Y$ such that $C_{x}$ is a surface \cite[Proposition 2.4]{MR2564370}.
Hence $pr_{1}^{-1}(l)$ is a surface for all $l\in F$. Moreover,
$pr_{1}^{-1}(l)$ is connected for all $l\in F$. Indeed, $pr_{1}^{-1}(l)$
is the union of connected subvarieties $C_{x}$ all of them meeting
at the point $(l,l)$. Furthermore, $pr_{1}^{-1}(l)$ is smooth for
general $l\in F$ by \cite[Section 3, Lemme 1]{MR860684}, hence irreducible. 
\end{proof}
I thank Mingmin Shen for suggesting the following proof to me.
\begin{lem}
\label{lem:IirrCod2}The scheme $I\subseteq\FF$ is irreducible of
dimension 6. 
\end{lem}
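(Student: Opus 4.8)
The plan is to exploit the first projection $pr_{1}\colon I\ra F$ together with the fibrewise analysis of Lemma \ref{rmk:connected}. Recall that $F$ is smooth and connected, hence irreducible, of dimension $4$; that $pr_{1}$ is surjective, since it admits the diagonal as a section $l\mapsto(l,l)$; and that, by Lemma \ref{rmk:connected}, every fibre $pr_{1}^{-1}(l)$ is a surface, irreducible for $l$ in a dense open subset $U\subseteq F$.

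I would first dispatch the dimension. As $pr_{1}$ is surjective onto an irreducible $4$-fold with all fibres of dimension $2$, the theorem on dimensions of fibres shows that every irreducible component of $I$ dominating $F$ has dimension $4+2=6$, whereas any component whose image lies in the proper closed subset $F\less U$ (of dimension at most $3$) has dimension at most $3+2=5$. Thus it suffices to produce one component of dimension $6$ and to rule out all others.

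For the $6$-dimensional component I would restrict over $U$: the map $pr_{1}^{-1}(U)\ra U$ is surjective onto an irreducible base with all fibres irreducible of the constant dimension $2$, so by the standard irreducibility criterion for such equidimensional families the total space $pr_{1}^{-1}(U)$ is irreducible. Its closure $I'\subseteq I$ is then an irreducible closed subset of dimension $6$, hence, by the dimension count, an irreducible component of $I$. Proving the lemma now reduces to the equality $I=I'$, i.e. to showing that $pr_{1}^{-1}(l)\subseteq I'$ for every $l\in F$.

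This last equality is where the real difficulty lies: over $F\less U$ the fibres are reducible surfaces, so a priori a spurious component of $I$ could be concealed in one of their extra branches, and the formal fibration data do not exclude this. To settle it I would note that the diagonal $\{(l,l):l\in F\}$, being the closure of its restriction over $U$, is contained in $I'$, so that $I'$ meets every fibre; and that, since $I'\ra F$ is dominant between irreducible varieties, $I'\cap pr_{1}^{-1}(l)$ has dimension at least $6-4=2$ and is therefore a full-dimensional subset of the surface $pr_{1}^{-1}(l)$. What remains is to show that $I'$ in fact contains \emph{every} irreducible component of each reducible fibre; I expect this to be the crux. I would approach it geometrically through the incidence picture, writing $I$ as the image under the projection to $\FF$ of the fibre product
\[
P\times_{Y}P=\{(l,l',x)\in F\times F\times Y\mid x\in L\cap L'\},
\]
which over the dense locus of $x\in Y$ with $C_{x}$ an irreducible curve is irreducible of dimension $6$ and, since a general pair of incident lines meets in a single point, maps birationally onto $I'$. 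The surviving obstacle is then precisely to prove that a pair of lines meeting only at one of the finitely many special points $x$ with $\dim C_{x}=2$ still lies in $I'$; controlling this locus, and thereby excluding any extra component, is the heart of the argument.
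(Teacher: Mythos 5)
Your reduction is sound as far as it goes: the closure $I'$ of $pr_{1}^{-1}(U)$ is indeed the unique irreducible component of $I$ dominating $F$, and it has dimension $6$. But you have correctly diagnosed that this leaves a genuine gap: nothing in the fibration data excludes a component of $I$ of dimension $\le 5$ lying over a proper closed subset of $F$ (for instance a piece of some $C_{x}\times C_{x}$ over one of the finitely many points $x$ with $\dim C_{x}=2$), and your closing paragraph leaves exactly this point open. The missing ingredient is a \emph{lower} bound on the dimension of every component of $I$, and the paper obtains it from the very object you introduce at the end. Set $J:=(q\times q)^{-1}(\Delta_{Y})\subseteq P\times P$, which is your $P\times_{Y}P$. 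Since $Y$ is a smooth fourfold, $\Delta_{Y}\subseteq Y\times Y$ is locally cut out by four equations, so $J$ is locally defined by four equations inside the smooth $10$-dimensional variety $P\times P$; by Krull's height theorem every irreducible component of $J$ has dimension at least $6$. The proper map $p\times p:J\ra I$ is surjective and contracts only $\Delta_{P}$ to $\Delta_{F}$ (two distinct incident lines meet in a single point), so it is generically finite on each component of $J$; hence every irreducible component of $I$ also has dimension at least $6$. Combined with the upper bound coming from the fact that every fibre of $pr_{1}$ is a surface, this forces every component to dominate $F$, and your own argument then identifies it with $I'$ --- with no case analysis over the special points $x$.

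In short: your use of the first projection to isolate the unique dominating component agrees with the paper, but the step you yourself flag as ``the heart of the argument'' is resolved in the paper not by a geometric study of the special fibres, rather by the observation that $I$ is a proper, generically finite image of a subscheme cut out by four equations in the smooth tenfold $P\times P$. Without that lower bound (or an equivalent one) the proof is incomplete.
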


\begin{proof}
Let $J:=(q\times q)^{-1}(\Delta_{Y})$. Then $J$ is locally defined
by four equations in $P\times P$, hence each component of $J$ has
dimension at least 6. The map 
\[
p\times p:J\ra I
\]
is surjective and only contracts $\Delta_{P}$ to $\Delta_{F}$. Hence
$p\times p$ is birational and each component of $I$ has dimension
at least 6. Since $pr_{1}^{-1}(l)$ is a surface for all $l\in F$
by Lemma \ref{rmk:connected}, each component of $I$ has dimension
6. Moreover, $pr_{1}^{-1}(l)$ is irreducible for general $l\in F$.
It follows that only one component of $I$ maps surjectively to $F$.
Indeed, let $I_{1}$ be any irreducible component of $I$ such that
$pr_{1}(I_{1})=F$, then we have a surjective map $pr_{1|I_{1}}:I_{1}\ra F$
between two irreducibles varieties of dimension, respectively, 6 and
4. The fibres of this map are $pr_{1|I_{1}}^{-1}(l)=pr_{1}^{-1}(l)\cap I_{1}$.
Thus, for dimensional reasons, the general fibre of $pr_{1|I_{1}}$
is a surface, hence $pr_{1}^{-1}(l)\cap I_{1}$ is a component of
$pr_{1}^{-1}(l)$. As $pr_{1}^{-1}(l)$ is irreducible for general
$l\in F$, it follows that $pr_{1}^{-1}(l)$ has only one component,
therefore
\[
pr_{1}^{-1}(l)=pr_{1}^{-1}(l)\cap I_{1}.
\]
We have proved that $pr_{1}^{-1}(l)\subseteq I_{1}$ for general $l\in F$.
If $I_{2}$ is another irreducible component of $I$ such that $pr_{1}(I_{2})=F$,
the same argument implies that $pr_{1}^{-1}(l)\subseteq I_{2}$ for
general $l\in F$. It follows that $pr_{1}^{-1}(l)\subseteq I_{1}\cap I_{2}$
for general $l\in F$, so $I_{1}=I_{2}$ as they are both irreducible.

Any other component of $I$ different from $I_{1}$ maps to a proper
closed subset of $F$. For dimensional reasons this component must
have dimension at most 5, so that $I=I_{1}$. It follows that $I$
is irreducible and $\dim I=6.$ 
\end{proof}
We define 
\[
\rho:G(2,6)\times G(2,6)\rra G(4,6)
\]
to be the rational map given by the linear span of two general lines
inside a projective space of dimension 5. In the following proposition,
we use an argument already used in \cite[Proposition 20.7]{MR3460114}.
\begin{prop}
\label{prop:IndspanI}The indeterminacy locus of the restricted rational
map\textup{ 
\[
\rho_{|\FF}:\FF\rra G(4,6)
\]
}is the variety $I$. 
\end{prop}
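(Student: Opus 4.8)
The plan is to prove the two inclusions $\Ind{\rho_{|\FF}}\subseteq I$ and $I\subseteq\Ind{\rho_{|\FF}}$ separately, the first being immediate and the second carrying all the weight. For the easy inclusion I would observe that if $(l,l')\notin I$ then $L$ and $L'$ are disjoint, so $\dim\left\langle L,L'\right\rangle =3$ and the wedge $L\wedge L'\in\bP(\wedge^{4}\C^{6})$ is nonzero, equal to the Plücker point of $\left\langle L,L'\right\rangle \in G(4,6)$. Since $\rho$ is precisely the map induced by the linear system of these bilinear wedge coordinates, it is a morphism on the open locus of disjoint lines, in particular on $\FF\less I$, whence $\Ind{\rho_{|\FF}}\subseteq I$.

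For the reverse inclusion I would first reduce to the generic situation: because $I$ is irreducible of dimension $6$ by Lemma \ref{lem:IirrCod2} and $\Ind{\rho_{|\FF}}$ is closed, it is enough to show that a general point of $I$ is an indeterminacy point, as its closure will then be all of $I$. Let $B\subseteq G(2,6)\times G(2,6)$ denote the base locus of the wedge linear system, that is, the locus of pairs of incident lines, so that $(\FF)\cap B=I$. Following the argument of \cite[Proposition 20.7]{MR3460114}, I would resolve $\rho$ by the single blow-up $g:\Gamma_{\rho}\ra G(2,6)\times G(2,6)$ of the base ideal, whose exceptional locus $E=g^{-1}(B)$ is an effective Cartier divisor. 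Taking $\Gamma$ to be the strict transform of $\FF$ inside $\Gamma_{\rho}$, i.e. the closure of the graph of $\rho_{|\FF}$, the projection $f:\Gamma\ra\FF$ is a proper birational morphism onto the smooth, hence normal, variety $\FF$; by Zariski's main theorem $\rho_{|\FF}$ is regular at a point exactly when the fibre of $f$ over it is a single point, and one has $f^{-1}(I)=E\cap\Gamma$.

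The heart of the matter, and the step I expect to be the main obstacle, is the dimension estimate $\dim f^{-1}(I)=7$. Since $\Gamma$ is irreducible of dimension $8$ and is not contained in $E$ (it dominates $\FF\nsubseteq B$), the set $W:=E\cap\Gamma$ is an effective Cartier divisor on $\Gamma$, hence of pure codimension $1$, so every component of $W$ has dimension $7$. As $f$ is surjective we have $f(W)=I$, so by irreducibility of $I$ some component of $W$ dominates it; comparing dimensions, $\dim W=7>6=\dim I$ forces the general fibre of $f$ over $I$ to have positive dimension, and in particular not to be a single point. By the reduction above, a general and therefore every point of $I$ lies in $\Ind{\rho_{|\FF}}$, which together with the first inclusion gives $\Ind{\rho_{|\FF}}=I$. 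The delicate point driving the whole argument is the excess-dimension phenomenon: although two general lines in $\P 5$ are disjoint and $B$ has codimension $3$ in $G(2,6)\times G(2,6)$, the intersection $I=(\FF)\cap B$ has codimension only $2$ in $\FF$; it is exactly this failure of $\FF$ to meet $B$ in the expected dimension that makes the Cartier divisor $W$ acquire an extra dimension of fibres over $I$, which is what one must pin down rigorously.
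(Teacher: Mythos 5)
Your proof is correct, but it takes a genuinely different route from the paper's. The paper also reduces to showing that a general point of $I$ is an indeterminacy point, but does so by an explicit deformation-theoretic computation rather than a dimension count: at a general $(l,l')\in I$ with $x=L\cap L'$, the line $L$ is of first type, so $N_{L/Y}=\O_{\P 1}\oplus\O_{\P 1}\oplus\O_{\P 1}(1)$ is globally generated; for each $[a:b]\in\P 1$ one chooses a section $s_{a,b}$ with $s_{a,b}(x)=aw_{0}+bw_{1}$ and a curve $C\subset F$ through $l$ with that tangent direction, and one computes that the limit of the spans along $C\times\{l'\}$ is $\langle v_{l},v_{l'},s_{a,b}(x)\rangle$, which genuinely varies with $[a:b]$; two curves giving two different limits, no extension exists. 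Your argument replaces this local analysis by the global observation that the exceptional set of $f:\Gamma\ra\FF$ over $I$ is (the support of) a Cartier divisor on the $8$-dimensional $\Gamma$, hence of pure dimension $7$, while $\dim I=6$ by Lemma \ref{lem:IirrCod2}, so the fibres over $I$ are positive-dimensional and Zariski's main theorem on the normal variety $\FF$ applies. Both routes ultimately hinge on the same input, namely the codimension-$2$ statement of Lemma \ref{lem:IirrCod2} together with closedness of $\Ind{\rho_{|\FF}}$ and irreducibility of $I$. Yours is shorter and more robust --- in fact it needs no genericity at all, since every component of $W$ is $7$-dimensional and maps into the $6$-dimensional $I$, so every fibre of $f$ over $I$ has dimension at least $1$ --- whereas the paper's computation buys an explicit description of the $\P 1$ of limiting $3$-planes over a general point of $I$, i.e., of the fibres of the resolution, which is of independent geometric interest. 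The only steps you should spell out when writing this up are the identification of the closure of the graph of $\rho_{|\FF}$ with the strict transform of $\FF$ in the blow-up of the base ideal (which holds because the blow-up of the base ideal of a linear system is the graph closure of the associated map, and $\FF\less I$ is dense in $\dom{\rho_{|\FF}}$), and the set-theoretic equality $f^{-1}(I)=E\cap\Gamma$; both are routine and you state them correctly.
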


\begin{proof}
This map is clearly defined in the open set of pairs $(l,l')$ such
that $L\cap L'=\emptyset$. In particular 
\[
\Ind{\rho_{|\FF}}\subseteq I.
\]
To prove the other inclusion, let $(l,l')\in I$ be a general point
of $I$, and let $x=L\cap L'$. By surjectivity of the first projection
$I\ra F$, the point $l$ is general in $F$. This implies that $l$
is of first type. That is, the normal bundle to $L$ in $Y$ is 
\[
N_{L/Y}=\O_{\P 1}\oplus\O_{\P 1}\oplus\O_{\P 1}(1),
\]
see, e.g., \cite[Appendix A.3]{MR3460114}. The bundle $N_{L/Y}$
is of rank 3, which implies that we have three linearly independent
vectors that generate $N_{L/Y,x}$. Take $v_{l'}$ to be the image
of a generator of $T(L')_{x}$ in $N_{L/Y,x}$ and choose a basis
$\{v_{l'},w_{0},w_{1}\}$ of $N_{L/Y,x}$.

For every point $(a,b)\in\mathbb{C}^{2}\setminus\{(0,0)\}$, we have
a vector $aw_{0}+bw_{1}\in N_{L/Y,x}$. Since $N_{L/Y}$ is globally
generated, the evaluation at $x$
\[
ev_{x}:H^{0}(L,N_{L/Y})\ra N_{L/Y,x}
\]
is a surjective linear map of vector spaces. By choosing a section
of $ev_{x}$, for each vector $aw_{0}+bw_{1}$ we get a section $s_{a,b}\in H^{0}(L,N_{L/Y})$
such that $s_{a,b}(x)=aw_{0}+bw_{1}$. By \cite[Theorem 6.13]{MR3617981},
we have an isomorphism $H^{0}(L,N_{L/Y})\cong T(F)_{l}$. In particular,
there is a vector in $T(F)_{l}$ induced by $s_{a,b}$. 

Let $C$ be a smooth curve in $F$ such that $l$ is in $C$ and the
tangent direction of $C$ at $l$ is that induced by $s_{a,b}$. We
may assume that for general $l_{c}\in C$, the lines $L_{c}$ and
$L'$ are disjoint in $Y$. If we restrict $\rho_{|\FF}$ to $C\times\{l'\}$,
then $\rho_{|C\times\{l'\}}$ is defined at a general point (hence,
at any point) of $C\times\{l'\}$. So we see that $\rho_{|C\times\{l'\}}$
is well defined at $(l,l')$. For every $l_{c}\in C$ we denote by
$\bP_{l_{c}}$ the linear subspace of $\P 5$ represented by $\rho_{|C\times\{l'\}}(l_{c},l')$.

Let $W$ be the restriction of the universal family $P\ra F$ to $C$.
The flat morphism $W\ra C$ is a $\P 1$-bundle over a smooth curve.
In particular, we can assume that there exists a section $\alpha:C\ra W$
such that $\alpha(l)=x$. Consider the line from $x$ to $\alpha(l_{c})$,
which is clearly contained in $\bP_{l_{c}}$. When $l_{c}$ approaches
$l$, by continuity we get a line through $x$ in $\bP_{l}$. Such
a line is generated, in a natural way, by a lift of $s_{a,b}(x)$
to $T(\P 5)_{x}$. We can get such a lift by considering the following
diagram of vector spaces with exact row and column
\begin{equation}
\xymatrix{ &  &  & 0\ar[d]\\
 &  &  & N_{L/Y,x}\ar[d]\\
0\ar[r] & T(L)_{x}\ar[r] & T(\P 5)_{x}\ar[r] & N_{L/\P 5,x}\ar[r] & 0.
}
\label{eq:Diaglift}
\end{equation}
We deduce that
\[
\rho_{|C\times\{l'\}}(l,l')=\langle v_{l},v_{l'},s_{a,b}(x)\rangle,
\]
where $v_{l}$ spans the tangent direction of $L$ in $T(\P 5)_{x}$.

Notice that if we consider points $(a,b)\in\mathbb{C}^{2}\setminus\{(0,0)\}$
modulo a multiplicative constant, then for each $[a:b]\in\P 1$ we
get a different space $\langle v_{l},v_{l'},s_{a,b}(x)\rangle$. Indeed,
the canonical map
\[
T(L)_{x}\rightarrow N_{L/Y,x}
\]
is zero, so $\{v_{l},v_{l'},w_{0},w_{1}\}$ is linearly independent
in $T(\P 5)_{x}$. Note that we lifted the generators of $N_{L/Y,x}$
using \eqref{eq:Diaglift}. Obviously, the vector space generated
by $\{v_{l},v_{l'},s_{a,b}(x)\}$ depends only on $[a:b]\in\P 1$.
In particular the image of $(l,l')$ depends on the curve $C$. This
implies that $\rho_{|\FF}$ cannot be extended to a general point
of $I$ and we are done.
\end{proof}

\section{General facts about the LLSS variety}
\begin{defn}
A rational normal curve of degree 3, or twisted cubic for short, is
a smooth curve $C\subset\P 3$ that is projectively equivalent to
the image of $\P 1$ under the Veronese embedding $\P 1\ra\P 3$ of
degree 3. 
\end{defn}

If $X\subseteq\bP^{N}$ is a projective variety, we denote by $\Hilb{3z+1}(X)$
the Hilbert scheme of curves contained in $X$ with Hilbert polynomial
equal to $3z+1$.

Piene and Schlessinger \cite{MR796901} showed that $\Hilb{3z+1}(\P 3)=H_{0}\cup H_{1}$,
where $H_{0}$ is a 12-dimensional smooth irreducible component such
that the general point is a rational normal curve, and $H_{1}$ is
a 15-dimensional smooth irreducible component such that the general
point is a curve $C$ such that $C_{red}$ is a plane cubic. A generalized
twisted cubic is a subscheme in $\P 3$ represented by a point in
$H_{0}$.

We define $\Hilb{gtc}(\P 3)$ as the subscheme $\Hilb{gtc}(\P 3):=H_{0}\subset\Hilb{3z+1}(\P 3)$.
It is known that $\Hilb{3z+1}(\P 5)$ contains a smooth irreducible
component $\Hilb{gtc}(\P 5)$ that parameterizes generalised twisted
cubics.

The general cubic fourfold $Y$ in $\P 5$ does not contain a plane,
see \cite[Section 1, Lemme 1]{MR860684}.
\begin{defn}
We set $F_{3}(Y):=\Hilb{gtc}(\P 5)\cap\Hilb{3z+1}(Y)$.
\end{defn}

By \cite[Theorem 4.7]{1305.0178} $F_{3}(Y)$ is a smooth variety
of dimension 10.

An element $[\Gamma]\in F_{3}(Y)$ is the class of a one dimensional
subscheme $\Gamma$ of $Y$ with Hilbert polynomial equal to $3z+1$.
The linear span of $\Gamma$ is a 3-dimensional space $\left\langle \Gamma\right\rangle \cong\P 3$,
and $[\Gamma]\in\Hilb{gtc}(\left\langle \Gamma\right\rangle )$. So
the span induces a morphism 
\[
F_{3}(Y)\ra G(4,6)
\]
and, as stated by \cite[p.113 and Theorem 4.8]{1305.0178}, there
is a commutative diagram

\begin{equation}
\xymatrix{F_{3}(Y)\ar[d]_{\phi}\ar[r] & G(4,6)\\
Z'\ar[ru]_{g}
}
\label{eq:ResolutionSteinSpan}
\end{equation}
where $Z'$ is a smooth irreducible projective variety. The diagram
(\ref{eq:ResolutionSteinSpan}) has the following remarkable properties: 
\begin{itemize}
\item The morphism $\phi$ is a $\P 2$-fibration. 
\item The morphism $g$ is finite on the open subset $g^{-1}(W_{ADE})=:V_{ADE}\subseteq Z'$
where 
\[
W_{ADE}:=\{P\in G(4,6)|P\cap Y\,\mathrm{has}\,ADE\,\mathrm{singularities\,or\,is\,smooth}\}.
\]
\item The degree of $g$ on $V_{ADE}$ is 72 by \cite[Theorem 2.1 and Table 1]{1305.0178}. 
\end{itemize}
Moreover, by \cite[Theorem 4.11 and Proposition 4.5]{1305.0178} there
exists a divisorial contraction 
\[
\xymatrix{Z'\ar[d]_{\sigma} & \bP(T_{Y})\ar[d]\ar@{_{(}->}[l]\\
Z & Y\ar@{_{(}->}[l]
}
\]
making $Z'$ the blow up of a variety $Z$ over a subvariety canonically
isomorphic to $Y$. By \cite[Theorem 4.19]{1305.0178}, $Z$ is an
hyperkähler variety.

Obviously, both $Z'$ and $Z$ depend on $Y$, so they should be denoted
by $Z'(Y)$ and $Z(Y)$. We choose to keep the same notation as \cite{1305.0178}.
So, when no confusion is possible, we will simply write $Z'$ and
$Z$.

\section{The Voisin map}

In \cite[Proposition 4.8]{MR3524175} Voisin defined a rational map
$\psi:\FF\rra Z$ using the following nice geometric argument. Let
$(l,l')\in\FF$ be a general point, that is, $l$ and $l'$ are the
classes of two disjoint lines $L$ and $L'$ such that the following
surface 
\[
S_{l,l'}:=\left\langle L,L'\right\rangle \cap Y
\]
is smooth. The point $(l,l')$ defines a linear system in $S_{l,l'}$
given by the divisor 
\[
D_{l,l'}=L-L'-K_{S_{l,l'}}.
\]
Since $\O_{S_{l,l'}}(1)=\O_{S_{l,l'}}(-K_{S_{l,l'}})$, in $|D_{l,l'}|$
there is a curve of the form $L\cup C'_{x}$, where $x$ is any point
of $L$ and $C'_{x}$ is the unique conic such that $\left\langle x,L'\right\rangle \cap Y=L'\cup C'_{x}$.
Then any member of this linear system is a generalized twisted cubic
contained in $Y$. Voisin defines the map $\psi$ by setting $\psi(l,l')$
to be the class in $Z$ of any member of $|D_{l,l'}|$. The degree
of the map is obtained as follows. It can be seen that $D_{l,l'}$
defines a morphism $\varphi_{D_{l,l'}}:S_{l,l'}\ra\P 2$ that contracts
exactly 6 lines. The members of $|D_{l,l'}|$ are pull-backs of lines
in $\P 2$. The line $L$ is the inverse image of a blown up point,
thus it is a component of the pull-back of any line through that point.
We can see, by intersection theory in $S_{l,l'}$, that $L'$ is the
strict transform of a conic through the other five points. Then we
have 6 lines that are components of some rational normal curve in
$|D_{l,l'}|$, so we have 6 possible choices of pairs of lines $R,R'\subseteq S_{l,l'}$
such that $|D_{l,l'}|=|D_{r,r'}|$.

We will describe Voisin's construction in full detail. Note that there
exists a rational map 
\[
\psi':\FF\rra Z',
\]
which differs from $\psi$ by a birational map, i.e., $\psi=\sigma\circ\psi'$.
In particular, by Voisin's construction of $\psi$ we already know
that $\psi'$ is dominant and has degree 6. In Proposition \ref{prop:psi'}
we will check that $\Ind{\psi'}=I$ and give a different argument
for the degree of $\psi'$.

First of all, we notice that if $L$ is a line and $C$ is a conic
in a projective space such that $L$ is not contained in the plane
defined by $C$ and if $L\cap C=\{x\}$, then $L\cup C$ is a limit
of rational normal curves \cite[Section 1.b p. 39]{MR685427}. If
both $L$ and $C$ are contained in $Y$, we have $[L\cup C]\in F_{3}(Y)$.
As already pointed out in \cite[Proposition 4.8]{MR3524175}, there
is a rational map 
\[
\begin{array}{c}
\psi_{1}:P\times F\dashrightarrow F_{3}(Y)\end{array}
\]
defined as follows. Let $(l,l')$ be not in $I$, let $x\in L$ be
a point and let $C'_{x}$ be the unique conic such that 
\[
\left\langle x,L'\right\rangle \cap Y=L'\cup C'_{x}.
\]
Then 
\[
\psi_{1}(l,x,l'):=[L\cup C'_{x}].
\]
Consider 
\[
U_{ADE}:=\left\{ (l,l')\in\FF\sd(l,l')\notin I,S_{l,l'}:=\left\langle L,L'\right\rangle \cap Y\,\mathrm{has}\,ADE\,\mathrm{singularities\,or\,is\,smooth}\right\} .
\]
Pick $(l,l')\in U_{ADE}$. Since the linear span of $L\cup C'_{x}$
is $\left\langle L,L'\right\rangle $ for each $x\in L$, then the
image of the curve 
\[
\Gamma_{l,l'}=\{[L\cup C_{x}]|x\in L\cong\P 1\}
\]
under the span map 
\[
F_{3}(Y)\ra G(4,6)
\]
is the point $\left\langle L,L'\right\rangle $. In other words the
curve $\Gamma_{l,l'}$ is contracted by $g\circ\phi$. By construction,
$g\circ\phi(\Gamma_{l,l'})\subset W_{ADE}$ so that 
\[
\phi(\Gamma_{l,l'})\subset V_{ADE}.
\]
The curve $\Gamma_{l,l'}$ must be contracted by $\phi$, since $\phi(\Gamma_{l,l'})$
is in the set where $g$ is finite. Let 
\[
p_{1}:P\times F\ra\FF
\]
be the canonical $\P 1$-bundle. Then the restricted map 
\[
\phi\circ\psi_{1}:p_{1}^{-1}(U_{ADE})\ra Z'
\]
contracts all the fibres of 
\[
p_{1}:p_{1}^{-1}(U_{ADE})\ra U_{ADE}.
\]
Since $p_{1}$ is a linear $\bP^{1}$-bundle, we can consider an open
set $U\subseteq U_{ADE}$ trivializing $p_{1}$ \cite[Section 3.2]{MR1318687}.
Then the diagram 
\begin{equation}
\xymatrix{p_{1}^{-1}(U)\cong\P 1\times U\ar[d]_{p_{1,U}}\ar[r]^{\,\quad\,\quad\,\quad\phi\circ\psi_{1}} & Z'\\
U
}
\label{eq:RigidityLemma}
\end{equation}
satisfies the hypothesis of the Rigidity Lemma \cite[Proposition 16.54]{MR2675155}.
Indeed: $U$ is reduced, $Z'$ is separated and $\P 1$ is reduced,
connected and proper. It follows that there exists a unique morphism
$U\ra Z'$ making (\ref{eq:RigidityLemma}) commutative. We can cover
$U_{ADE}$ by trivializing open subsets, repeat that argument and
get a map 
\[
U_{ADE}\ra Z'
\]
which point-wise is 
\[
(l,l')\mapsto\phi([L\cup C_{x}]).
\]
Because we know that this map does not depend on the choice of $x\in L$.
We have therefore defined a rational map 
\[
\begin{array}{cccc}
\psi': & \FF & \rra & Z'\\
 & (l,l') & \mapsto & \phi([L\cup C'_{x}]),
\end{array}
\]
for all $(l,l')$ in the open subset $U_{ADE}$ of $\FF$. As we said
before, the map $\psi'$ is dominant of degree 6 because these properties
hold for $\psi$, as Voisin proved. In the following proposition,
we will see how the degree of $\psi'$ can be obtained also from the
map $g:Z'\ra G(4,6)$.
\begin{prop}
\label{prop:psi'}The rational map $\psi':\FF\rra Z'$ defined above
is dominant, has degree 6 and $\Ind{\psi'}=I$.
\end{prop}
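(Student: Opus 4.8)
The plan is to compare $\psi'$ with the span map $\rho_{|\FF}$ of Proposition~\ref{prop:IndspanI}. By construction the generalized twisted cubic $L\cup C'_x$ has linear span $\left\langle L,L'\right\rangle $ for every $x\in L$, so $g(\phi([L\cup C'_x]))=\left\langle L,L'\right\rangle $; in other words the triangle formed by $\psi'$, the morphism $g$ and $\rho_{|\FF}$ commutes, i.e. $g\circ\psi'=\rho_{|\FF}$. A general $\P 3\subset\P 5$ cuts $Y$ in a smooth cubic surface, which carries pairs of skew lines, so $\rho_{|\FF}$ is dominant and hence so is $\psi'$. Feeding this triangle into Lemma~\ref{lem:CommDiagRatMaps} with $f=\rho_{|\FF}$ and $h=\psi'$ gives at once $I=\Ind{\rho_{|\FF}}\subseteq\Ind{\psi'}$, which is one of the two inclusions we need.

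The degree of $\psi'$ I would read off from the same triangle by multiplicativity of degrees of dominant rational maps between the three eight-dimensional varieties $\FF$, $Z'$ and $G(4,6)$: from $g\circ\psi'=\rho_{|\FF}$ one gets $\deg\rho_{|\FF}=\deg g\cdot\deg\psi'$. The fibre of $\rho_{|\FF}$ over a general $P\in G(4,6)$ is the set of ordered pairs of disjoint lines on the smooth cubic surface $P\cap Y$; since such a surface contains $27$ lines, each skew to exactly $16$ of the others, this fibre has $27\cdot 16=432$ points, so $\deg\rho_{|\FF}=432$. As $\deg g=72$ on $V_{ADE}$, we conclude $\deg\psi'=432/72=6$, recovering Voisin's value independently of her construction.

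It remains to prove the reverse inclusion $\Ind{\psi'}\subseteq I$, which is the crux. The point is to show that $\psi'$ is regular at every pair of disjoint lines, not merely on $U_{ADE}$. For $(l,l')\notin I$ the conic $C'_x$ is defined for all $x\in L$, since the plane $\left\langle x,L'\right\rangle $ is not contained in $Y$ (which contains no plane) and does not contain $L$ (as $L$ and $L'$ are skew); moreover $L\cap C'_x=\{x\}$ and $L$ is not coplanar with $C'_x$, so $L\cup C'_x$ is a generalized twisted cubic. Thus $\psi_1$, and hence $\phi\circ\psi_1$, is defined along the whole fibre $\Gamma_{l,l'}=\{[L\cup C'_x]\mid x\in L\}$ of $p_1$ over $(l,l')$. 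By the Rigidity Lemma, $\psi'$ extends across $(l,l')$ as soon as $\phi$ contracts $\Gamma_{l,l'}$ to a point, so it suffices to show that $\phi(\Gamma_{l,l'})$ is a single point for every skew pair.

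To establish this I would argue that all the curves $L\cup C'_x$ lie in a single linear system on $S_{l,l'}$, hence in a single fibre of the $\P 2$-fibration $\phi$. Indeed each plane $\left\langle x,L'\right\rangle $ is a hyperplane of $\left\langle L,L'\right\rangle =\P 3$ meeting $S_{l,l'}$ along $L'\cup C'_x$, so $L'+C'_x$ is a hyperplane section and, as $S_{l,l'}$ is Gorenstein with $\omega_{S_{l,l'}}=\O_{S_{l,l'}}(-1)$, the class $C'_x\sim-K_{S_{l,l'}}-L'$ is independent of $x$; consequently $L\cup C'_x\in|L-L'-K_{S_{l,l'}}|$ for all $x$. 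This is where the main difficulty lies: when $S_{l,l'}$ fails to have $ADE$ singularities the surface may be non-normal, and the identification of this linear system with an actual fibre of $\phi$ becomes delicate, so a priori $\phi$ could map $\Gamma_{l,l'}$ onto a curve contracted only by $g$. The plan is to exclude this by analysing the bad locus $N=(\FF\setminus I)\setminus U_{ADE}$: a cubic surface containing two skew lines cannot be a cone, which disposes of the simple-elliptic case, and one checks that for a smooth $Y$ not containing a plane the span $\left\langle L,L'\right\rangle $ of two skew lines of $Y$ cannot be tangent to $Y$ along a curve, ruling out the non-normal sections as well. Hence $\phi$ contracts $\Gamma_{l,l'}$ for every skew pair, $\psi'$ is regular on all of $\FF\setminus I$, and $\Ind{\psi'}=I$.
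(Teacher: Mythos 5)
Your treatment of the first inclusion, of dominance, and of the degree (the fibre of $\rho_{|\FF}$ over a general $M$ consists of the $27\cdot16=432$ ordered pairs of skew lines on the smooth cubic surface $M\cap Y$, divided by $\deg g=72$) coincides with the paper's argument. The gap is in the reverse inclusion $\Ind{\psi'}\subseteq I$, which you correctly identify as the crux. Your strategy is to show that $\phi$ contracts $\Gamma_{l,l'}$ for \emph{every} skew pair, and to get this you need every skew pair to lie in $U_{ADE}$: you dispose of cones correctly (all lines on a cone pass through the vertex), but the assertion that ``the span of two skew lines of $Y$ cannot be tangent to $Y$ along a curve'' is left as ``one checks'' and appears to be false in general. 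For a line $L_{0}\subset Y$ of second type the embedded tangent hyperplanes to $Y$ at the points of $L_{0}$ span only a pencil, so their common intersection is a $\P 3$, call it $P_{0}$, and $S_{0}=P_{0}\cap Y$ is an integral cubic surface singular along the whole of $L_{0}$; when $S_{0}$ is a projection of a cubic scroll rather than a cone, it carries a one-parameter family of ruling lines of $Y$ whose generic pairs are skew. Since lines of second type exist on every smooth cubic fourfold, you cannot in general exclude skew pairs $(l,l')\notin U_{ADE}$, and your argument stalls exactly at the non-normal case you yourself flag as ``delicate''.

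The paper sidesteps this entirely: it never proves that $\phi$ contracts $\Gamma_{l,l'}$ outside $U_{ADE}$. Instead it takes a local section $s$ of the $\P 1$-bundle $p_{1}:P\times F\ra\FF$ and observes that $\psi'':=\phi\circ\psi_{1}\circ s$ is a \emph{morphism} on all of $\FF\less I$ (the conic $C'_{x}$, hence $\psi_{1}$, is defined whenever $L\cap L'=\emptyset$, and $\phi$ is everywhere defined), agreeing with $\psi'$ on the dense open set $U_{ADE}$. A rational map that agrees with a morphism on a dense open subset is defined wherever that morphism is, so $\Ind{\psi'}\subseteq I$ follows with no case analysis of the singularities of $S_{l,l'}$. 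Your argument can be repaired the same way: once $\psi_{1}$ is known to be defined over all of $p_{1}^{-1}(\FF\less I)$, a single section of $p_{1}$ suffices and the Rigidity Lemma is not needed at the bad pairs at all.
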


\begin{proof}
The composition of $\psi'$ with $g:Z'\ra G(4,6)$ gives rise to a
commutative diagram

\begin{equation}
\xymatrix{F\times F\ar@{-->}[d]_{\psi'}\ar@{-->}[r]^{\rho_{|\FF}} & G(4,6)\\
Z'\ar[ru]_{g}
}
\label{eq:FFZ'G}
\end{equation}
where $\rho$ is the span map. The inclusion $I\subseteq\Ind{\psi'}$
is an application of Lemma \ref{lem:CommDiagRatMaps} and Proposition
\ref{prop:IndspanI} to the Diagram (\ref{eq:FFZ'G}). To prove the
other inclusion, let 
\[
p_{1}:P\times F\ra\FF
\]
be the canonical $\P 1$-bundle and consider the diagram

\begin{equation}
\xymatrix{P\times F\ar[d]_{p_{1}}\ar@{-->}[r]^{\psi_{1}} & F_{3}(Y)\ar[d]_{\phi}\\
F\times F\ar@{-->}[r]^{\psi'} & Z'.
}
\label{eq:Diag-1}
\end{equation}
All maps in this diagram are defined on $p_{1}^{-1}(U_{ADE})$ and
the diagram commutes there. Indeed, if $(l,x,l')\in p_{1}^{-1}(U_{ADE})$,
then we know that $\psi_{1}$ is defined in $(l,x,l')$ as $(l,l')\notin I$.
Then 
\[
\phi(\psi_{1}(l,x,l'))=\phi([L\cup C_{x}])=\phi(\Gamma_{l,l'})=\psi'(l,l')=\psi'(p_{1}(l,x,l')).
\]
Consider a local section $s:\FF\ra P\times F$ of $p_{1}$, and let
\[
\psi'':=\phi\circ\psi_{1}\circ s
\]
be the rational map defined on $\FF\less I$. By commutativity of
(\ref{eq:Diag-1}), $\psi''$ coincides with $\psi'$ on $U_{ADE}$.
This implies that $\psi'$ can be extended to every point of $\FF\less I$.
We have then proved that 
\[
\Ind{\psi'}\subseteq I.
\]
Let $M\in G(4,6)$, then 
\[
\rho^{-1}(M)=\left\{ (l,l')\in\FF\less I|\left\langle L,L'\right\rangle =M\right\} .
\]
In particular, the pairs $(l,l')\in\rho^{-1}(M)$ represent pairs
of disjoint lines contained in the cubic surface $M\cap Y$. If $M$
is sufficiently general, then $M\cap Y$ is smooth and contains 27
lines, each of them meeting exactly 10 other lines \cite[pag 485]{MR507725}.
It can easily be seen that there are $27\cdot(27-11)=432$ pairs of
lines contained in $\rho^{-1}(M)$, and therefore $\rho$ is generically
finite of degree 432. All the manifolds appearing in the commutative
diagram \eqref{eq:FFZ'G} are 8-dimensional. Hence, as $\rho$ is
generically finite, it is also dominant. This implies that also $g$
and $\psi'$ are dominant and generically finite. Since $\deg\rho=\deg{\psi'}\deg g$
by commutativity of (\ref{eq:FFZ'G}), the degree of $\psi'$ is 
\[
\deg\psi'=\frac{432}{72}=6.
\]
\end{proof}
\begin{defn}
\label{def:Resolution}Let $\psi$ be the Voisin map. A resolution
of the indeterminacy of the map $\psi$ is a commutative diagram 
\begin{equation}
\xymatrix{\widetilde{F\times F}\ar[d]_{\pi}\ar[dr]^{\wt{\psi}}\\
F\times F\ar@{-->}[r]^{\psi} & Z
}
\label{eq:PsiPitPsi}
\end{equation}
where $\wt{\FF}$ is a non-singular variety and $\pi$ is a birational
morphism that is an isomorphism outside $\Ind{\psi}$. 
\end{defn}

We will denote simply by $\wt{\psi}:\widetilde{\FF}\ra Z$ a fixed
resolution of the indeterminacy of $\psi$. Moreover, we denote by
$E$ the support of the exceptional divisor of $\pi$. The existence
of such a resolution follows by \cite[I.Question (E) p.140]{MR0199184}.
The map $\pi$ may be obtained as a sequence of blow-ups along smooth
subvarieties. 
\begin{rem}
Notice that we have the following commutative diagram.

\begin{equation}
\xymatrix{P\times F\ar[dd]_{p_{1}}\ar@{-->}[rr]^{\psi_{1}} &  & F_{3}(Y)\ar[dd]^{\sigma\circ\phi}\ar[dl]_{\phi}\\
 & Z'\ar[dr]^{\sigma}\\
F\times F\ar@{-->}[rr]^{\psi}\ar@{-->}[ur]^{\psi'} &  & Z
}
\label{eq:Diag-1-1}
\end{equation}
Hence, the Voisin map defined in \cite[Proposition 4.8]{MR3524175}
is the composition $\sigma\circ\psi'$. 
\end{rem}

For the reader's convenience, we collect the following. 
\begin{lem}[{{\cite[Remark 4.10]{MR3524175}}}]
\label{lem:psiFinite}The map $\psi:\FF\rra Z$ is étale of degree
6 where it is defined. Furthermore, the image of the exceptional divisor
of the resolution $\pi$ of $\psi$ contains a divisor.
\end{lem}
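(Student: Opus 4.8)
The plan is to treat the degree and the étale property together by means of the holomorphic symplectic forms, and to settle the statement about the exceptional divisor by a dimension count over $I$ resting on Proposition~\ref{prop:IndspanI}. The degree is immediate: by Proposition~\ref{prop:psi'} the map $\psi'$ is dominant of degree $6$, and since $\psi=\sigma\circ\psi'$ with $\sigma\colon Z'\to Z$ birational, $\psi$ is dominant of degree $6$ as well. In particular $\psi$ is generically finite, so $d\psi$ is an isomorphism on a dense open subset of $\dom{\psi}$.

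For étaleness I would use the criterion that a morphism $f$ between smooth varieties of the same dimension for which $f^{*}\omega$ is \emph{nondegenerate} (where $\omega$ is a symplectic form on the target) is étale, since then $df$ is injective, hence bijective, at each point. Thus it suffices to show that $\psi^{*}\omega_{Z}$ is nondegenerate on $\FF\less I$, where $\omega_{Z}$ is the symplectic form of $Z$. First I would extend $\psi^{*}\omega_{Z}$ across $I$: since $I$ has codimension $2$ in the smooth variety $\FF$ (Lemma~\ref{lem:IirrCod2}) and $\Omega^{2}_{\FF}$ is locally free, the form extends to a section of $H^{0}(\FF,\Omega^{2}_{\FF})$. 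Because $F$ is hyperk\"ahler, $H^{0}(F,\Omega^{1}_{F})=0$ and $H^{0}(F,\Omega^{2}_{F})=\C\,\omega_{F}$, so K\"unneth gives $H^{0}(\FF,\Omega^{2}_{\FF})=\C\,pr_{1}^{*}\omega_{F}\oplus\C\,pr_{2}^{*}\omega_{F}$, whence $\psi^{*}\omega_{Z}=a\,pr_{1}^{*}\omega_{F}+b\,pr_{2}^{*}\omega_{F}$ for scalars $a,b$. This form is block-diagonal on $T(F)_{l}\oplus T(F)_{l'}$, so it has constant rank on $\FF$ and is nondegenerate exactly when $a\ne0$ and $b\ne0$; if either scalar vanished it would have rank $\le 4$ everywhere. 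As $d\psi$ is an isomorphism at a general point, $\psi^{*}\omega_{Z}$ is nondegenerate there, forcing $a,b\ne0$; being of constant rank, it is then nondegenerate on all of $\FF$, so $\psi$ is étale wherever it is defined. (This is \cite[Remark 4.10]{MR3524175}.)

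For the last assertion I would choose $\pi$ to resolve $\psi$ and $\psi'=\sigma^{-1}\circ\psi$ simultaneously, and show $\dim\wt{\psi}(E)=7$. Fix a general $(l,l')\in I$ with $x=L\cap L'$. By the proof of Proposition~\ref{prop:IndspanI}, the span map $\rho_{|\FF}$ has the whole $\P 1$ of limiting values $\langle v_{l},v_{l'},s_{a,b}(x)\rangle$, $[a:b]\in\P 1$, so the morphism $g\circ\wt{\psi'}$ resolving $\rho_{|\FF}$ maps the fibre $\pi^{-1}(l,l')$ onto a $\P 1\subset G(4,6)$; in particular $\wt{\psi'}$ does not contract $\pi^{-1}(l,l')$. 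A parameter count --- $6$ dimensions for $(l,l')\in I$ and $1$ for $[a:b]$, a general such $\P 3$ containing only finitely many concurrent pairs of lines of $Y$ --- shows that $g(\wt{\psi'}(E))$, the closure of the spans $\langle v_{l},v_{l'},s_{a,b}(x)\rangle$, is $7$-dimensional; hence $\dim\wt{\psi'}(E)=7$, i.e.\ $\wt{\psi'}(E)$ is a divisor in $Z'$.

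Finally $\wt{\psi}=\sigma\circ\wt{\psi'}$ and $\sigma$ contracts only the divisor $\bP(T_{Y})$ to $Y\subset Z$. The divisor $\wt{\psi'}(E)$ is swept out by $\P 3$'s containing a concurrent pair of lines of $Y$, whereas $g(\bP(T_{Y}))$ consists of the $\P 3$'s spanned by punctual (tangential) twisted cubics; a general member of the former need not be of the latter kind, so $\wt{\psi'}(E)\ne\bP(T_{Y})$. Therefore $\sigma$ is an isomorphism near a general point of $\wt{\psi'}(E)$, and $\wt{\psi}(E)=\sigma(\wt{\psi'}(E))$ is again $7$-dimensional, i.e.\ contains a divisor. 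I expect this last step --- controlling the image of $E$ under the divisorial contraction $\sigma$, so as to exclude the degenerate possibility $\wt{\psi'}(E)=\bP(T_{Y})$ (which would collapse the image to the codimension-$4$ locus $Y$) --- to be the main obstacle; the étale statement and the dimension count reduce, respectively, to the symplectic computation and to the description of the limiting spans already obtained in Proposition~\ref{prop:IndspanI}.
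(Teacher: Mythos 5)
The first half of your argument is sound. The degree computation is the same as the paper's, and your \'etale argument --- extend $\psi^{*}\omega_{Z}$ across the codimension-$2$ locus $I$, write the extension by K\"unneth as $a\,pr_{1}^{*}\omega_{F}+b\,pr_{2}^{*}\omega_{F}$, and conclude that it has constant rank, hence is nondegenerate everywhere once it is nondegenerate at one point --- is correct; it is in fact Voisin's own argument from the cited Remark~4.10. The paper proves \'etaleness differently: from $K_{\wt{\FF}}=\pi^{*}K_{\FF}+E'=\wt{\psi}^{*}K_{Z}+R_{\wt{\psi}}$ and the triviality of both canonical classes one gets $E'=R_{\wt{\psi}}$, so the ramification locus of $\wt{\psi}$ is exactly $E=\mathrm{Supp}\,E'$. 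Either route works.

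The second half has genuine gaps. First, a circularity: Definition~\ref{def:Resolution} requires $\pi$ to be an isomorphism outside $\Ind{\psi}$, and at this point of the paper one only knows $\Ind{\psi}\subseteq I$. Since $\Ind{\psi'}=I$ by Proposition~\ref{prop:psi'}, a $\pi$ that also resolves $\psi'$ --- which your argument needs, since otherwise $\pi^{-1}(l,l')$ is a single point for $(l,l')\in I\less\Ind{\psi}$ and cannot dominate a $\P 1$ of limiting spans --- exists only if $\Ind{\psi}=I$, i.e.\ only if Theorem~\ref{thm:Main} is already known; but that theorem is proved afterwards using this lemma. Second, the parameter count silently assumes that a general limiting span $\langle v_{l},v_{l'},s_{a,b}(x)\rangle$ arises from only finitely many pairs $(l,l')\in I$; this needs an argument, since the general member of the $7$-dimensional family of such $\P 3$'s is \emph{a priori} constrained (it contains the plane $\langle L,L'\rangle$) and could conceivably cut $Y$ in surfaces with infinitely many lines. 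Third, and most seriously, the step you yourself flag as the main obstacle --- excluding $\wt{\psi'}(E)=\bP(T_{Y})$ so that $\sigma$ does not collapse your divisor onto the codimension-$4$ locus $Y$ --- is only asserted (``need not be''), not proved. The paper avoids all three issues with a short topological argument: set $D=\wt{\psi}(E)$ and $G=\wt{\psi}^{-1}(D)\supseteq E$; by the \'etale statement $\wt{\psi}\colon\wt{\FF}\less G\ra Z\less D$ is a nontrivial degree-$6$ covering, and if $D$ contained no divisor then $Z\less D$ would remain simply connected (as $Z$ is, by \cite[Theorem 4.19]{1305.0178}), a contradiction. That argument uses nothing about $I$, $\rho$ or $\sigma$, so you should either adopt it or supply proofs of the three missing steps.
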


\begin{proof}
The map $\psi$ is dominant of degree 6 because it is the composition
of a dominant degree 6 rational map and of a blow up. Let $R_{\td{\psi}}$
be the ramification divisor of $\td{\psi}$, that is the divisor supported
in the subset of points of $\wt{\FF}$ where the induced map $d\td{\psi}:T_{\wt{\FF}}\ra\td{\psi}^{*}T_{Z}$
is not an isomorphism. The scheme structure is given locally by the
vanishing of the Jacobian determinant $\det d\td{\psi}$, see \cite[Example 3.2.20]{MR1644323}.
Thus we have the formula 
\[
K_{\widetilde{F\times F}}=\pi^{*}K_{F\times F}+E'=\tilde{\psi}^{*}K_{Z}+R_{\td{\psi}}
\]
and since the first Chern class of $F$ and $Z$ is trivial, $E'=R_{\td{\psi}}$.
This implies that the ramification locus of $\td{\psi}$ is $E=\mathrm{Supp}E'$,
so that the Jacobian matrix is of maximal rank outside $E$, in other
words, $\psi$ is étale where it is defined. Let $D=\tilde{\psi}(E)$
be the image of the exceptional divisor. If we denote $G:=\tilde{\psi}^{-1}(D)\supseteq E$,
then $\tilde{\psi}_{|\widetilde{F\times F}\less G}:\widetilde{F\times F}\less G\ra Z\less D$
is a nontrivial topological cover of degree 6. Since $Z$ is simply
connected \cite[Theorem 4.19]{1305.0178}, if $D$ does not contain
a divisor then by \cite[Chap. X Th\' eor\` eme 2.3]{MR0301725} also
$Z\less D$ is simply connected, so it cannot have nontrivial topological
cover. Hence, an irreducible component of $D$ must be a divisor and
we are done. 
\end{proof}
We are now ready for the following. 
\begin{proof}[Proof of the Theorem \ref{thm:Main}]
From the commutative diagram

\begin{equation}
\xymatrix{F\times F\ar@{-->}[d]_{\psi'}\ar@{-->}[r]^{\psi} & Z\\
Z'\ar[ru]_{\sigma}
}
\label{eq:FFZ'G-1}
\end{equation}
we have the inclusion 
\begin{equation}
\Ind{\psi}\subseteq I,\label{eq:IndinI}
\end{equation}
by Lemma \ref{lem:CommDiagRatMaps} and Proposition \ref{prop:psi'}.
Since $\sigma:Z'\ra Z$ is a blow up along $Y\subseteq Z$, then on
$Z\less Y$ we can compose its inverse with $g:Z'\ra G(4,6)$ and
get a map 
\[
g_{|Z}:Z\less Y\ra G(4,6).
\]
Let $\tilde{\psi:}\widetilde{\FF}\ra Z$ be as in Definition \ref{def:Resolution},
and set 
\begin{equation}
W:=\pi(\tilde{\psi}^{-1}(Y)).\label{eq:W}
\end{equation}
We point out that outside the exceptional divisor $E$, the map $\tilde{\psi}$
is quasi-finite by Lemma \ref{lem:psiFinite}. It follows that there
is a commutative diagram of rational maps 
\begin{equation}
\xymatrix{F\times F\less W\ar@{-->}[d]_{\psi_{|\FF\less W}}\ar@{-->}[rr]^{\rho_{|\FF\less W}} &  & G(4,6)\\
Z\less Y.\ar[rru]_{g_{Z}}
}
\label{eq:FFZYG}
\end{equation}
We argue by contradiction. Suppose $\Ind{\psi}\subsetneq I$, and
set 
\[
T:=\pi(\pi^{-1}(I)\less(\wt{\psi}^{-1}(Y)\cap\pi^{-1}(I))).
\]
\begin{claim*}
The set $T$ is dense in $I$.
\end{claim*}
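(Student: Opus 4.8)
The plan is to produce, inside $\wt{\FF}$, a $6$-dimensional subvariety lying over $I$ which $\wt{\psi}$ does not collapse into $Y$, and then to push it down by $\pi$. Concretely, let $\td I$ be the strict transform of $I$, that is, the closure in $\wt{\FF}$ of $\pi^{-1}(I\less\Ind\psi)$. Since we are arguing under the hypothesis $\Ind{\psi}\subsetneq I$ and $I$ is irreducible of dimension $6$ by Lemma \ref{lem:IirrCod2}, the set $I\less\Ind\psi$ is a dense open subset of $I$; as $\pi$ is an isomorphism outside $\Ind\psi$ by Definition \ref{def:Resolution}, the variety $\td I$ is irreducible of dimension $6$. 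Because the exceptional divisor satisfies $E\subseteq\pi^{-1}(\Ind\psi)$, the morphism $\pi$ restricts to an isomorphism between $\td I\less E$ and $I\less\Ind\psi$; in particular $\td I\nsubseteq E$, and $\td I\less E$ is dense in $\td I$.

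The heart of the argument is a dimension count. By Lemma \ref{lem:psiFinite} the morphism $\wt{\psi}$ is quasi-finite on $\wt{\FF}\less E$, so its restriction to the dense open subset $\td I\less E$ of $\td I$ has finite fibres. Hence $\dim\wt{\psi}(\td I)=\dim\td I=6$, while $\dim Y=4$, and therefore $\wt{\psi}(\td I)\nsubseteq Y$. I expect this to be the only genuine obstacle: a priori $\wt{\psi}$ could contract $\td I$ into the $4$-dimensional locus $Y$, and it is precisely the quasi-finiteness of $\wt{\psi}$ away from $E$, combined with $\td I\nsubseteq E$, that excludes this possibility.

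It then remains to deduce the density of $T$. Since $\wt{\psi}(\td I)\nsubseteq Y$ and $\td I$ is irreducible, the set $\td I\cap\wt{\psi}^{-1}(Y)$ is a proper closed subset of $\td I$, so $\td I\less\wt{\psi}^{-1}(Y)$ is dense in $\td I$ and meets the open set $\td I\less E$ in a dense open subset of $\td I$. Using $\td I\subseteq\pi^{-1}(I)$ together with the identity $\pi^{-1}(I)\less(\wt{\psi}^{-1}(Y)\cap\pi^{-1}(I))=\pi^{-1}(I)\less\wt{\psi}^{-1}(Y)$, we obtain
\[
\pi\big(\td I\less\wt{\psi}^{-1}(Y)\big)\subseteq\pi\big(\pi^{-1}(I)\less\wt{\psi}^{-1}(Y)\big)=T.
\]
As $\pi$ maps $\td I\less E$ isomorphically onto the dense open subset $I\less\Ind\psi$ of $I$, the left-hand side is dense in $I$, and hence so is $T$. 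This proves the claim.
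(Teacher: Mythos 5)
Your argument is correct and follows essentially the same route as the paper: both proofs exhibit a dense subset of $\pi^{-1}(I)$ lying over $I\less\Ind{\psi}$, use the quasi-finiteness of $\wt{\psi}$ away from $E$ to see that its image cannot be contained in the $4$-dimensional $Y$, and push the resulting dense open set down by $\pi$. The only cosmetic difference is that you work with the strict transform (the closure of $\pi^{-1}(I\less\Ind{\psi})$) while the paper works with $\pi^{-1}(I\less\Ind{\psi})$ itself; this changes nothing of substance.
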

\begin{proof}[Proof of the Claim]
By Lemma \ref{lem:IirrCod2} and by our assumption $\Ind{\psi}\subsetneq I$,
the set $I\less\Ind{\psi}$ is dense open in $I$. It follows that
also 
\[
\wt I=\pi^{-1}(I\less\Ind{\psi})
\]
is dense in $\pi^{-1}(I)$, since $\pi$ is an isomorphism outside
$\Ind{\psi}$. Now $\wt I\less(\wt{\psi}^{-1}(Y)\cap\wt I)$ is dense
in $\wt I$ since it is open and not empty. Indeed, if it were empty
then $\wt I\subseteq\wt{\psi}^{-1}(Y)$, so 
\[
\wt{\psi}(\wt I)\subseteq Y.
\]
That is impossible since $Y$ has dimension 4 and $\wt{\psi}(\wt I)$
has dimension 6 (because $\wt I$ is contained in the open set where
$\wt{\psi}$ is quasi-finite). Since the map $\pi_{|\wt I}:\wt I\ra I$
is dominant, the image of the dense open set $\wt I\less(\wt{\psi}^{-1}(Y)\cap\wt I)$
is dense. But 
\begin{eqnarray*}
\pi(\wt I\less(\wt{\psi}^{-1}(Y)\cap\wt I)) & = & \pi(\pi^{-1}(I\less\Ind{\psi})\less(\wt{\psi}^{-1}(Y)\cap\pi^{-1}(I\less\Ind{\psi})))\\
 & \subseteq & \pi(\pi^{-1}(I)\less(\wt{\psi}^{-1}(Y)\cap\pi^{-1}(I)))=T,
\end{eqnarray*}
so that $T$ is dense. 
\end{proof}
Thus there exists a point 
\begin{equation}
(l,l')\in T\cap(I\less\Ind{\psi}),\label{eq:point(l,l')}
\end{equation}
and since $\pi$ is an isomorphism outside $\Ind{\psi}$, there exists
a unique $u\in\widetilde{\FF}$ such that 
\begin{equation}
\pi(u)=(l,l').\label{eq:UnicityU}
\end{equation}
By definition of $T$, the point $u$ is not in $\wt{\psi}^{-1}(Y)$.
If we apply Lemma \ref{lem:CommDiagRatMaps} to Diagram (\ref{eq:FFZYG})
we get 
\[
\Ind{\rho_{|\FF\less W}}\subseteq\Ind{\psi_{|\FF\less W}}
\]
and thus 
\[
I\less(W\cap I)\subseteq\Ind{\psi}\less(W\cap\Ind{\psi}).
\]
In particular, we have a dense subset of $I$ contained in $\Ind{\psi}$,
so that $I\subseteq\Ind{\psi}$. Hence by (\ref{eq:IndinI}) we get
$I=\Ind{\psi}$. This contradicts the assumption $\Ind{\psi}\subsetneq I$
and we are done. 
\end{proof}
\bibliographystyle{amsalpha}
\bibliography{biblo}
 
\end{document}